\theoremstyle{plain}
\newtheorem{thm}{Theorem}[section]
\newtheorem{lem}[thm]{Lemma}
\theoremstyle{definition}
\newtheorem{defi}[thm]{Definition}
\newtheorem{remark}[thm]{Remark}
\newtoks\by
\newtoks\paper
\newtoks\book
\newtoks\jour
\newtoks\yr
\newtoks\pages
\newtoks\vol
\newtoks\publ
\def\ota{{\hbox\vol{???}}}
\def\cLear{\by=\ota\paper=\ota\book=\ota\jour=\ota\yr=\ota
\pages=\ota\vol=\ota\publ=\ota}
\def\endpaper{\the\by, \the\paper.
{\it\the\jour\/} {\bf \the\vol} (\the\yr), \the\pages.\cLear}
\def\endbook{\the\by, {\it\the\book}. \the\publ.\cLear}
\def\endprep{\the\by, \the\paper. \the\jour.\cLear}
\def\name#1#2{#1 #2}
\def\et{ and }
\def\supp{\textup{supp }}
\def\id{\textup{id}}
\def\spa{\textup{span}}
\def\sign{\textup{sign}}
\def\be{\begin{equation}}
\def\ee{\end{equation}}
\numberwithin{equation}{section} \headheight=12pt
\begin{document}
\title[Embeddings between Lorenz \dots]{Embedding between Lebesgue and weak Lebesgue sequence spaces is strictly singular}

\author[J.Lang]{J. Lang }
\address{Department of Mathematics, The Ohio State University,
231 West 18th Avenue, Columbus, OH 43210, USA}
\email{lang@math.osu.edu}
\author[A.Nekvinda]{A. Nekvinda}
\address{Department of Mathematics\\ Faculty of Civil Engineereng\\
       Czech Technical University\\Th\' akurova 7\\16629 Prague 6\\
       Czech Republic}
\email{nales@mat.fsv.cvut.cz}
\date{}
\thanks{The second author was supported by the grant P201-18-00580S of the Grant Agency of the Czech Republic}
\subjclass
[2000]{Primary 47G10, Secondary 47B10}
\keywords{Sobolev embedding, spaces with variable exponent, Approximation theory, s-numbers}

\maketitle
\begin{abstract}
Given $1< p < q < \infty$ it is well know that the natural embedding of Lebesgue sequence spaces $\ell_p \hookrightarrow \ell_q$ is strictly singular. In this paper we extend this classical results and show that even the natural non-compact embedding  between Lebesgue and weak Lebesgue sequence spaces $\ell_{p}\hookrightarrow \ell_{p,\infty}$ is strictly singular. 
\end{abstract}
\section{Introduction}
It is true generally acknowledge that among all bounded operators acting on Banach spaces the compact operators hold a quite unique position since they play an essential role in many different areas of mathematics. Then  also  operators, which are in some ``sense" close to compact operators, deserve detailed study. Among all classes of  non-compact operators which are ``close" to compact maps the special position is occupied  by strictly singular and by finitely strictly singular operators (for definition see Sec. 2).

Let us mention a couple of examples highlighting importance of strictly singular operators.
It is well know that Fredholm operators are invariant when  perturbed by strictly singular operators
(i.e. if $T$ is Fredholm and $S$ is strictly singular then $T+S$ is Fredholm, see \cite[Therem 4.63]{AA book}). And that the limiting case of  Fourier transformation $\mathcal{F}$ considered as an operator from $L^1$ into $L^\infty$, which  is obviously non-compact, is strictly singular (see \cite{Pl}).
%
%
Also when we consider limiting Sobolev embedding $E_d$ (here by limiting Sobolev embedding we understand Sobolev embedding for which is impossible to "significantly" increase the source space or decrease the target space without loosing the boundedness):
\begin{align} \label{eq E_d}
	&E_d:W^{1}_{0}L^{d,1} ((0,1)^d )\hookrightarrow C((0,1)^d ),
\end{align}
where  $W^{1}_{0}L^{d,1} ((0,1)^d )$ denote a space of all functions $u$ for which $|\nabla u| $ belongs to Lorentz space $L^{d,1}$  and $u$ has a zero trace. It was observed, from the behavior of strict s-numbers for $E_d$, in \cite{LM} that $E_d$, which is non-compact, is finitely strictly singular and then strictly singular map. 



From the above examples arises a natural question: Are all ``limiting'' Sobolev embeddings on bounded domain  strictly singular?

In many cases, as in (\ref{eq E_d}), the limiting Sobolev embeddings have the optimal source or the optimal target space equal to Lorentz space. In order to be able study the above question we need to have more accurate   information about relation between Lorentz spaces, for instance if the natural embedding between Lebesgue sequence and Lorentz sequence spaces
 \begin{equation} \label{Lorentz Embd}
 I:\ell_{p} \to\ell_{p,\infty},
 \end{equation}
 is strictly singular.
 \ This question, which can be also considered as a natural generalization of of well known fact that for $1<p<q<\infty$ the non-compact embedding $\ell_p \hookrightarrow \ell_q$ is strictly singular (see \cite[Therem 4.58]{AA book}), is the focus of our paper and we will prove that the non-compact  embedding (\ref{Lorentz Embd}) is  strictly singular.

 The paper is structured as follows. In Sect. 2, we recall  definitions
 and we collect all necessary later-needed material and technical lemmas. In Sect. 3 is proved that embedding $\ell_{p} \hookrightarrow\ell_{p,\infty}$ is strictly singular.

\section{preliminaries}

In this section we recall definitions, notations and some technical lemmas needed in Section 3.
We start by recalling the definition  of strictly singular operators.
\begin{defi}
A bounded operator $T:X \to Y$ between Banach spaces is said to be strictly singular if there is no infinite dimensional closed subspace $Z$ of $X$ such that $T:Z \to T(Z)$, the restriction of $T$ to $Z$, is an isomorphism.	
\end{defi}


See \cite[section 4.5]{AA book} for more information about strictly singular operators. Note that it is not difficult to see that each compact operator is strictly singular. Also it is worthy to mention that there exists a class of operator called {\bf finitely strictly singular operators} which lays between compact and strictly singular operators. These are operators for which Bernstein numbers are vanishing, $b_n(T) \to 0$.

By $\#(F)$ we denote the number of elements of a finite set $F$.  For a sequence $u=(u_1,u_2,\dots)$ we define the corresponding modulus sequence $|u|=(|u_1|, |u_2|, \dots)$. We say that $|u|\le |v|$ if $|u_i|\le |v_i|$ for each $i\in\mathbb{N}$.

Now we introduce Banach function spaces which we will need later.
\begin{defi}
Let $\mathcal{S}$ be a a set of all sequences of real numbers and $\|.\|:\mathcal{S}\rightarrow[0,\infty]$. Assume that $\|.\|$ satisfies  for all $u,v\in \mathcal{S}$ and $\alpha\in\mathbb{R}$ we have
\begin{enumerate}[\rm(i)]
\item  $\|u+v\|\le \|u\|+\|v\|$,
\item $\|\alpha u\|=|\alpha|\ \|u\|$,
\item $\|u\|\ge 0$ and $\|u\|= 0$ if and only if $u=0$,
\item $\|u\|=\|\ |u|\ \|$,
\item if $|u|\le |v|$ then  $\|u\|\le \|v\|$,
\item if $0\le u_n\nearrow u$ then $\|u_n\|\nearrow\|u\|$,
\item if $\#\{i;u_i\neq 0\}<\infty$ then $\|u\|<\infty$.
\end{enumerate}
Define $X:=\{u;\|u\|<\infty\}$. Then we call $X$ a sequence Banach function space.
\end{defi}
Remark that each Banach function space is complete (for details see \cite{BS}, Theorem 1.6).

We repeated the definition of strictly singular operators on Banach function spaces by the following alternative definition:
\begin{defi}
Let $X,Y$ be Banach spaces and assume that $T:X\rightarrow Y$ be a linear bounded operator. We say that $T$ is strictly singular operator if
\begin{align*}
&\inf\{\|Tx\|_Y;\|x\|_X=1, x\in Z\}=0
\end{align*}
for each infinite dimensional subspace $Z\subset X$.
\end{defi}

\begin{defi}
Given a sequence $a=(a(1),a(2),\dots)\in c_0$ we set for $\lambda>0$
\begin{align*}
&\mu_a(\lambda)=\#\{i;|a(i)|>\lambda\}
\end{align*}
and
\begin{align*}
&a^*(j)=\min\{\lambda>0;\mu_a(\lambda)\le j\}.
\end{align*}
We will call $a^*=(a^*(1),a^*(2),\dots)$ as a non-increasing rearrangement of $a$.
\end{defi}
For a sequence $a=(a(1),a(2),\dots)\in c_0$ we define
\begin{align*}
&\supp a=\{j\in \mathbb{N}; a(j)\neq 0\}.
\end{align*}

\begin{defi}
Given a sequence $u=(u(1),u(2),\dots)\in c_0$ with $\supp u=\{n_1,n_2,...,n_k\}\subset\mathbb{N}$ and $n_1<n_2<\dots n_k$. Define a non-increasing rearrangement $u^{\diamond}$ of $u$ with respect to $\supp u$ by
\begin{align*}
&\begin{cases}
u^{\diamond}(n_j)=u^*(j)&\ j\in\{1,2,\dots,k\},\\
u^{\diamond}(i)=0&\ i\notin\{n_1,n_2,...,n_k\}.
\end{cases}
\end{align*}

\end{defi}
\begin{remark}
If $\supp u:=\{n+1,n+2,\dots,m\}$ then
\begin{align}
&u^{\diamond}(j)=u^*(j-n).\label{vjrvjvjvvvj}
\end{align}
\end{remark}

In the next we recall the definition of sequence spaces.
\begin{defi}
Let $1< p<\infty $. We set for a sequence $u$  norms:
\begin{align*}
&\|u\|_{p}=\Big(\sum\limits_{j=1}^\infty (u^*(j))^p \Big)^{1/p},\\
&\|u\|_{p,\infty}=\sup\ j^{1/p}\ u^*(j),
\end{align*}
and we define Lebesgue space $\ell_p$ and weak Lebesgue space $\ell_{p,\infty}$ as collections of all sequences $u$ for which the norms $\|u\|_{p}$ and $\|u\|_{p,\infty}$ are finite, respectively.
\end{defi}

Unfortunately, $\|.\|_{p,\infty}$ do not satisfy the triangle inequality (i). But it is well-known (see \cite{BS}, Lemma 4.5) that for $1<p<\infty$ there is a norm $\|.\|_{(p,\infty)}$ equivalent to $\|u\|_{p,\infty}$ which satisfies (i), ..., (vii). So, we can assume that $\|.\|_{p,\infty}$ satisfies (i) and we will mentioned the next well known lemma without proof. 
 \begin{lem} \label{vnor}
Let $1<p<\infty$. Then both spaces $\ell_{p}$ and $\ell_{p,\infty}$ are Banach function spaces and $\ell^{p}\hookrightarrow \ell^{p,\infty}$
 \end{lem}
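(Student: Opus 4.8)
The plan is to verify the seven Banach function space axioms (i)--(vii) for each of the two functionals $\|\cdot\|_p$ and $\|\cdot\|_{p,\infty}$ and then to establish the embedding by a single pointwise estimate on the non-increasing rearrangement. Since the triangle inequality (i) for $\|\cdot\|_{p,\infty}$ is already granted by passing to the equivalent norm $\|\cdot\|_{(p,\infty)}$ supplied by \cite{BS}, Lemma 4.5, the only axioms carrying genuine content are the lattice and continuity properties (iv)--(vi), all of which I would reduce to standard facts about the rearrangement $u^*$: that $u^*$ depends only on $|u|$, that $|u|\le|v|$ forces $u^*\le v^*$ termwise, and that $0\le u_n\nearrow u$ forces $u_n^*\nearrow u^*$ termwise.

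For $\ell_p$ I would first observe that summing over the non-increasing rearrangement does not change the value, so $\|u\|_p=(\sum_{j}|u(j)|^p)^{1/p}$ coincides with the ordinary $\ell_p$-norm; axioms (i), (ii), (iii), (vii) are then immediate, (iv) holds because $|u|$ and $u$ have the same rearrangement, (v) follows since $|u|\le|v|$ gives $u^*\le v^*$ termwise and the sum of $p$-th powers is monotone, and (vi) is the monotone convergence theorem applied to $\sum_j (u_n^*(j))^p$. For $\ell_{p,\infty}$ I would run through the same list using $\|u\|_{p,\infty}=\sup_j j^{1/p}u^*(j)$: homogeneity and the rearrangement-invariance in (iv) are immediate from $(\alpha u)^*=|\alpha|\,u^*$, property (v) again follows termwise from $u^*\le v^*$, and (vii) holds because a finitely supported sequence has $u^*(j)=0$ for all large $j$, so the supremum is attained and finite.

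The embedding \eqref{Lorentz Embd} I would obtain from the elementary chain
\begin{align*}
j\,(u^*(j))^p\le\sum_{i=1}^{j}(u^*(i))^p\le\sum_{i=1}^{\infty}(u^*(i))^p=\|u\|_p^p,
\end{align*}
valid because $u^*$ is non-increasing, so that $u^*(i)\ge u^*(j)$ for all $i\le j$. Taking $p$-th roots and then the supremum over $j$ gives $\|u\|_{p,\infty}\le\|u\|_p$, hence the inclusion $\ell_p\subset\ell_{p,\infty}$ is bounded (with norm at most the constant relating $\|\cdot\|_{p,\infty}$ and the genuine norm $\|\cdot\|_{(p,\infty)}$).

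I expect the one step requiring care to be the continuity axiom (vi) for $\ell_{p,\infty}$: one must justify that $0\le u_n\nearrow u$ implies $u_n^*(j)\nearrow u^*(j)$ for every fixed $j$ and then that the suprema converge upward. The termwise convergence of rearrangements follows from the monotone convergence $\mu_{u_n}(\lambda)\nearrow\mu_u(\lambda)$ of the distribution functions, which holds because $\{i;u(i)>\lambda\}=\bigcup_n\{i;u_n(i)>\lambda\}$, together with the defining formula $u^*(j)=\min\{\lambda>0;\mu_u(\lambda)\le j\}$; the upward convergence of $\sup_j j^{1/p}u_n^*(j)$ to $\sup_j j^{1/p}u^*(j)$ is then a general fact about monotone increasing families (the supremum of the pointwise limits equals the limit of the suprema when both are increasing). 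Everything else is a direct transcription of elementary properties of the rearrangement.
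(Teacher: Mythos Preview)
Your verification is correct and complete; the paper itself does not give a proof of this lemma, declaring it well known and leaving it to the reader. So there is no approach to compare---you have simply supplied the omitted details, and your embedding inequality $j\,(u^*(j))^p\le\sum_{i\le j}(u^*(i))^p\le\|u\|_p^p$ is exactly the standard one-line argument.
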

Given $u\in \ell_{p}\ (u\in\ell_{p,\infty})$ we will write $u(i)$ for the value of $u$ at the index $i$.



 We denote by $D_{p}$ the norm of this embedding, i.e.
\begin{align}
&D_p=\sup\{\|a\|_{p,\infty};\|a\|_{p}\le 1\} .\label{dcehoiefroir}
\end{align}

For a sequence $b=(b(1),b(2),\dots)$ and $m\in\mathbb{N}$
we define
\begin{align*}
	&P_m(b)=(b(1),b(2),\dots,b(m),0,0,\dots)\\
	&R_m(b)=b-P_m b=(0,0,\dots,0,b({m+1}),b({m+2}),\dots).
\end{align*}

\begin{defi}
	Let $X$ be a Banach function space of sequences. We say that $u\in X$ has an absolutely continuous norm in $X$, written $u\in X_a$, if 
\begin{align*}
	&\lim_{m\rightarrow\infty}\|R_m(u)\|_X= 0.
\end{align*}
 We say that $X$ has an absolutely continuous norm if $X_a=X$.
	
\end{defi}

We left proofs for the next two lemmas to the reader.

\begin{lem}
Let $1< p<\infty$. Then $\ell_{p}$ has an absolutely continuous norm, i.e.
\begin{align*}
&\lim_{m\rightarrow\infty} \|R_m (u)\|_p=0\ \text{ for each }\ u\in \ell_p.
\end{align*}
\end{lem}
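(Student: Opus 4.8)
The plan is to reduce the abstract rearrangement norm of $\ell_p$ to the familiar coordinatewise $\ell_p$ norm, and then invoke the vanishing of the tail of a convergent series. First I would note that for $u\in\ell_p$ one automatically has $u\in c_0$, since $\sum_j|u(j)|^p<\infty$ forces $|u(j)|\to 0$; hence the rearrangement $u^*$ is well defined and the norm $\|u\|_p$ makes sense. The crucial point is that $u$ and $u^*$ are equimeasurable, i.e. $\mu_u(\lambda)=\mu_{u^*}(\lambda)$ for every $\lambda>0$, which is immediate from the definition of $a^*$ since it is constructed precisely so as to preserve the distribution function. Using the layer-cake representation $t^p=\int_0^t p\lambda^{p-1}\,\d\lambda$ together with Tonelli's theorem for the counting measure, this yields
\begin{align*}
&\sum_{j=1}^\infty (u^*(j))^p=\int_0^\infty p\lambda^{p-1}\mu_{u^*}(\lambda)\,\d\lambda=\int_0^\infty p\lambda^{p-1}\mu_u(\lambda)\,\d\lambda=\sum_{j=1}^\infty |u(j)|^p,
\end{align*}
so that $\|u\|_p^p=\sum_{j}|u(j)|^p$ agrees with the ordinary $\ell_p$ norm.

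Next I would apply the very same identity to $R_m(u)=(0,\dots,0,u(m+1),u(m+2),\dots)$. Since the nonzero entries of $R_m(u)$ are exactly $u(m+1),u(m+2),\dots$, the equimeasurability computation gives
\begin{align*}
&\|R_m(u)\|_p^p=\sum_{j=m+1}^\infty |u(j)|^p .
\end{align*}
This is precisely the remainder of the convergent positive series $\sum_{j=1}^\infty|u(j)|^p$, and the remainder of a convergent series tends to $0$. Hence $\|R_m(u)\|_p\to 0$ as $m\to\infty$, which is the assertion.

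The argument is essentially routine, and the only step that deserves a word of care is the equimeasurability identity $\|u\|_p^p=\sum_j|u(j)|^p$, that is, verifying that passing to the non-increasing rearrangement leaves the sum of $p$-th powers unchanged; once this is in hand, the conclusion is just the standard fact that the tail of a convergent positive series vanishes. I emphasize that none of the deeper structure (completeness, or properties (i)--(vii) beyond the plain definition of $\|\cdot\|_p$) is needed here.
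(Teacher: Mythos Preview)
Your argument is correct: reducing the rearrangement-based definition of $\|\cdot\|_p$ to the ordinary coordinatewise sum via equimeasurability (the layer-cake identity is a clean way to do this), and then using that the tail of a convergent series of nonnegative terms vanishes, is exactly the natural route. The paper itself does not supply a proof of this lemma (it explicitly leaves it to the reader), so there is nothing to compare against; your write-up is precisely the kind of routine verification the authors had in mind.
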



\begin{lem}\label{fvfvjpfvj}
Let $1< p<\infty$. Assume that $X\subset\ell_{p}$ be a subspace, $\dim X=\infty$.
Set
\begin{align*}
&X_n=\{a\in X; a(1)=a(2)=\dots=a(n)=0\}.
\end{align*}
Then $\dim X_n=\infty$.
\end{lem}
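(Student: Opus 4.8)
The plan is to realize $X_n$ as the kernel of a finite-rank linear map defined on $X$ and then invoke the elementary fact that a subspace of finite codimension in an infinite-dimensional space is itself infinite-dimensional.

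First I would introduce the coordinate-truncation map $\Phi_n:X\to\mathbb{R}^n$ defined by $\Phi_n(a)=(a(1),a(2),\dots,a(n))$. Each coordinate evaluation $a\mapsto a(i)$ is a bounded linear functional on $\ell_p$ (since $|a(i)|\le\|a\|_p$), so $\Phi_n$ is a bounded linear operator. By the very definition of $X_n$ we have $X_n=\ker\Phi_n$, that is, $X_n=\{a\in X;\Phi_n(a)=0\}$.

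Next I would observe that the image $\Phi_n(X)$ is a linear subspace of $\mathbb{R}^n$, hence $\dim\Phi_n(X)\le n<\infty$. The first isomorphism theorem gives a linear isomorphism $X/X_n\cong\Phi_n(X)$, so the codimension of $X_n$ in $X$ satisfies $\dim(X/X_n)\le n$. Since by hypothesis $\dim X=\infty$, a finite-codimensional subspace must remain infinite-dimensional, and therefore $\dim X_n=\infty$, as claimed. (Equivalently, one may argue inductively: $X_1$ is the kernel in $X$ of the single functional $a\mapsto a(1)$, hence has codimension at most $1$ and is infinite-dimensional, and repeating this step $n$ times strips off one coordinate at a time.)

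There is no genuinely hard step here; the statement is essentially a linear-algebra fact about finite codimension. The only points meriting a word of care are that the coordinate functionals are bounded on $\ell_p$ (so $\Phi_n$ is a legitimate operator between the spaces in question) and that passing from ``codimension at most $n$'' to ``infinite-dimensional'' uses only $\dim X=\infty$, with no appeal to the norm of $\ell_p$ beyond what is needed to make $\Phi_n$ well defined.
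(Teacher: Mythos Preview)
Your proof is correct; the paper itself does not prove this lemma but explicitly leaves it to the reader, so there is nothing to compare against. Your argument via the finite-rank map $\Phi_n:X\to\mathbb{R}^n$ and the first isomorphism theorem is exactly the standard and expected justification for such a statement.
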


%
%

\begin{lem}\label{adkcvhovfj}
Suppose $1\le p<\infty$. Let $X\subset \ell_{p}$ be a closed subspace with $\dim X=\infty$.
Assume $n, N\in\mathbb{N}$ and $\varepsilon>0,1> \delta>0$. Then there exists $m\in\mathbb{N}$ and $u\in X_n$ such that denoting $v:=P_m u$, $w:=R_m u$ we have
\begin{align}
&\|u\|_{p}=1,\label{evklfkobb}\\
&m> 2n,\ m\ge N\label{dcvwvjwvjj}\\
&\supp v\subset\{n+1,n+2,\dots,m\},\label{dlkdlkvdkvk}\\
&|v(j)|\le \varepsilon\text{ for all }j,\label{skcnlvwig}\\
&1-\delta\le \|v\|_{p}\le1,\label{odcjvjevjvj}\\
&\|w\|_{p}\le \delta.\label{spvdjpbjpojb}
\end{align}
\end{lem}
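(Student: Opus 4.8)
The plan is to reduce everything to producing a single unit vector in $X_n$ with uniformly small coordinates, and then to build that vector by a gliding hump argument. First I would record two elementary reductions. Since for $p<\infty$ rearrangement does not change the $p$-th power sum, and $v=P_m u$, $w=R_m u$ have disjoint supports, one has $\|u\|_p^p=\|v\|_p^p+\|w\|_p^p$; combined with the superadditivity inequality $(1-\delta)^p+\delta^p\le 1$ (valid for $p\ge 1$, $0<\delta<1$), this shows that (\ref{odcjvjevjvj}) is an automatic consequence of (\ref{evklfkobb}) and (\ref{spvdjpbjpojb}). Moreover, once a vector $u\in X_n$ with $\|u\|_p=1$ and $\|u\|_\infty\le\varepsilon$ is in hand, the remaining conditions are cheap: since $u\in\ell_p$, the tails satisfy $\|R_m u\|_p\to 0$ as $m\to\infty$, so I may fix $m$ with $m>2n$, $m\ge N$ and $\|R_m u\|_p\le\delta$, which gives (\ref{dcvwvjwvjj}) and (\ref{spvdjpbjpojb}); then $v=P_m u$ satisfies $\supp v\subset\{n+1,\dots,m\}$ because $u(1)=\dots=u(n)=0$, which is (\ref{dlkdlkvdkvk}), and $|v(j)|\le|u(j)|\le\varepsilon$ gives (\ref{skcnlvwig}). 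So the entire content of the lemma is the construction of a unit vector in $X_n$ with sup-norm at most $\varepsilon$.

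To produce such a vector I would use a gliding hump inside $X_n$, which is infinite dimensional by Lemma \ref{fvfvjpfvj}. Fix an integer $K$ with $K^{-1/p}<\varepsilon/2$ and a small tolerance $\eta>0$ to be pinned down at the end. Put $m_0=n$ and inductively choose unit vectors $g_1,\dots,g_K\in X_n$ and indices $m_0<m_1<\dots<m_K$ as follows: having chosen $m_{j-1}$, apply Lemma \ref{fvfvjpfvj} with $X_n$ in the role of $X$ and $m_{j-1}$ in the role of $n$ to obtain a nonzero vector of $X_n$ that vanishes on the first $m_{j-1}$ coordinates, normalize it to a unit vector $g_j$, and then pick $m_j>m_{j-1}$ with $\|R_{m_j}g_j\|_p<\eta$. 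Writing $a_j:=P_{m_j}g_j$ and $b_j:=R_{m_j}g_j$, we have $g_j=a_j+b_j$, the block parts $a_1,\dots,a_K$ are supported on the pairwise disjoint intervals $\{m_{j-1}+1,\dots,m_j\}$, and $\|b_j\|_p<\eta$ with $\|a_j\|_p^p=1-\|b_j\|_p^p\ge 1-\eta^p$.

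Finally I would set $h=\sum_{j=1}^K g_j$ and $u=h/\|h\|_p$. Using disjointness of the supports of the $a_j$ and the elementary bound $\|\cdot\|_\infty\le\|\cdot\|_p$, I get $\|\sum_j a_j\|_\infty=\max_j\|a_j\|_\infty\le 1$ and $\|\sum_j a_j\|_p=(\sum_j\|a_j\|_p^p)^{1/p}\ge K^{1/p}(1-\eta^p)^{1/p}$, while $\|\sum_j b_j\|_p\le K\eta$ controls the tails in both norms. Hence $\|h\|_\infty\le 1+K\eta$ and $\|h\|_p\ge K^{1/p}(1-\eta^p)^{1/p}-K\eta$, so
\[
\|u\|_\infty=\frac{\|h\|_\infty}{\|h\|_p}\le\frac{1+K\eta}{K^{1/p}(1-\eta^p)^{1/p}-K\eta},
\]
and the right-hand side tends to $K^{-1/p}<\varepsilon/2$ as $\eta\to 0$. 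Choosing $\eta$ small enough forces $\|u\|_\infty\le\varepsilon$, and by construction $u\in X_n$ with $\|u\|_p=1$; together with the first paragraph this proves the lemma. The delicate point, and the step I expect to demand the most care, is the bookkeeping of the error terms $b_j$: one has to keep the total tail mass $K\eta$ genuinely negligible against the main term $K^{1/p}$ so that both the lower bound on $\|h\|_p$ and the upper bound on $\|h\|_\infty$ survive normalization, while also verifying at each stage that the relevant subspace of $X_n$ stays infinite dimensional so the induction can continue.
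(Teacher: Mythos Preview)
Your proof is correct and uses essentially the same gliding-hump strategy as the paper: pick unit vectors in the successive tail subspaces $X_{m_{j-1}}$, truncate to obtain near-disjoint blocks, sum, and normalize so that the sup norm is controlled by $K^{-1/p}$. Your two opening reductions---deriving \eqref{odcjvjevjvj} from \eqref{evklfkobb} and \eqref{spvdjpbjpojb}, and isolating ``find a unit vector in $X_n$ with $\|u\|_\infty\le\varepsilon$'' as the sole nontrivial task---make the argument a bit cleaner than the paper's version, which intertwines the choice of the cutoff $m$ with the block construction and uses geometrically decreasing tail tolerances $\delta/2^{j}$ where you use a single $\eta$; but the underlying idea is the same.
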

\begin{proof}Set $n_0:=n$ and
construct, by induction, sequences $n_0< n_1<n_2<\dots$ and $u_i\in X$ such that for $v_i:=P_{n_{i}}u_i$, $w_i:=R_{n_{i}}u_i$ we have
\begin{align}
&\supp v_i\subset\{n_{i-1}+1,n_{i-1}+2,\dots,n_{i}\},\label{dkdcdjpvdpvv}\\
&1-\frac{\delta}{2^{i}}\le \|v_i\|_{p}\le 1.\label{wdkvjwpvjwvppj}\\
&\|w_i\|_{p}\le \frac{\delta}{2^{i}}.\label{dlkvdnvkdjvdkv}
\end{align}

Since $\dim X_n=\infty$ we can find $u_1\in X_n$ with $\|u_1\|_{p}=1$. Take $n_1>n$ such that $\|R_{n_1}u_1\|_{p}\le \delta/2$. Denote $v_1:=P_{n_1}u_1$, $w_1:=R_{n_1}u_1$. Clearly, $\supp v_1\subset\{n+1,n+2,\dots,n_1\}$ and
\begin{align*}
&1\ge \|v_1\|_{p}\ge \|u_1\|_{p}-\|w_1\|_{p}\ge 1-\frac{\delta}{2}.
\end{align*}

Suppose that we have constructed $n_0< n_1<n_2<\dots<n_k$, $u_1,u_2,\dots,u_k\in X$ and appropriate functions $v_1,v_2,\dots,v_k$ satisfying  \eqref{dkdcdjpvdpvv} and \eqref{wdkvjwpvjwvppj}.
Since $\dim X_{n_k}=\infty$ we are able to find $u_{k+1}\in X_{n_k}$ with $\|u_{k+1}\|_{p}=1$. It is easy to see that we can take an index $n_{k+1}> n_k$ such that
$\|R_{n_{k+1}}u_{k+1}\|_{p}\le \frac{\delta}{2^{k+1}}$. Set $w_{k+1}=R_{n_{k+1}}u_{k+1}$, $v_{k+1}=P_{n_{k+1}}u_{k+1}$. Consequently
\begin{align*}
&1\ge \|v_{k+1}\|_{p}\ge \|u_{k+1}\|_{p}-\|w_{k+1}\|_{p}\ge 1-\frac{\delta}{2^{k+1}}.
\end{align*}
Moreover $\supp v_{k+1}\subset\{n_{k}+1,n_{k}+2,\dots,n_{k+1}\}$.

Now, consider sequences
\begin{align*}
&y_k:=\sum_{j=1}^k u_j,\ \  \ s_k:=\|y_k\|_{p}.
\end{align*}
We can write
\begin{align*}
&s_k=\Big\|\sum_{j=1}^k u_j\Big\|_{p}=\Big\|\sum_{j=1}^k v_j+\sum_{j=1}^k w_j\Big\|_{p}\ge \Big\|\sum_{j=1}^k v_j\Big\|_{p}-\Big\|\sum_{j=1}^k w_j\Big\|_{p}\\
&\ge \Big\|\sum_{j=1}^k v_j\Big\|_{p}-\sum_{j=1}^k \|w_j\|_{p}\ge \Big\|\sum_{j=1}^k v_j\Big\|_{p}-\sum_{j=1}^k \frac{\delta}{2^{j}}\ge \Big\|\sum_{j=1}^k v_j\Big\|_{p}-\delta.
\end{align*}
Since by \eqref{wdkvjwpvjwvppj} we obtain
\begin{align*}
& \|v_i\|_{p}\ge1-\frac{\delta}{2^{i}}\ge 1-\delta
\end{align*}
and $v_j$ have pairwise disjoint supports by \eqref{dlkdlkvdkvk}, then
\begin{align*}
&s_k=\Big\|\sum_{j=1}^k v_j\Big\|_{p}=\Big(\sum_{j=1}^k \|v_j\|_{p}^p\Big)^{1/p}\ge\Big(\sum_{j=1}^k(1-\delta)^p\Big)^{1/p}\ge (1-\delta)k^{1/p}
\end{align*}
and consequently $s_k\nearrow\infty$.

Then we can choose $m$ large enough such that
\begin{align}
&m> 2n,\ m\ge N,\ \frac{1}{s_m}\le \varepsilon\label{wdvfhnrogronhi}
\end{align}
and set
\begin{align*}
&u=\frac{1}{s_m}\sum_{j=1}^m u_j.
\end{align*}

It is seen from the definition of $s_m$ that
\begin{align*}
&\|u\|_{p}= 1,
\end{align*}
which proves \eqref{evklfkobb}.

Clearly, condition \eqref{dcvwvjwvjj} is satisfied. By the definition of $v=P_m u$ we obtain directly
\begin{align*}
&\supp v\subset\{n_0+1,n_0+2,\dots,m\}=\{n+1,n+2,\dots,m\}
\end{align*}
which proves \eqref{dlkdlkvdkvk}.

Since $\|v_k\|_p\le 1$ we have for each $s\in\mathbb{N}$
\begin{align*}
&|u_k(s)|\le 1.
\end{align*}
Clearly, using that $u_j$ have pairwise disjoint supports, we have for each $s$
\begin{align*}
&|v(s)|\le |u(s)|\le \frac{1}{s_m}\sum_{j=1}^m |u_j(s)|\le \frac{1}{s_m}\overset{\eqref{wdvfhnrogronhi}}{\le} \varepsilon
\end{align*}
which proves \eqref{skcnlvwig}.

At last,
\begin{align*}
&w=R_mu=R_m\Big(\frac{1}{s_m}\sum_{j=1}^m u_j\Big)=R_m\Big(\frac{1}{s_m}\sum_{j=1}^m v_j+\frac{1}{s_m}\sum_{j=1}^m w_j\Big)\\
&\frac{1}{s_m}\sum_{j=1}^m R_m( v_j)+\frac{1}{s_m}\sum_{j=1}^m R_m( w_j)=\frac{1}{s_m}\sum_{j=1}^m R_m( w_j).
\end{align*}
Thus
\begin{align*}
&\|w\|_{p}\le\frac{1}{s_m}\sum_{j=1}^m  \|R_m( w_j)\|_{p}\le\frac{1}{s_m}\sum_{j=1}^m  \|w_j\|_{p}\\
&\overset{\eqref{dlkvdnvkdjvdkv}}{\le}\frac{1}{s_m}\sum_{j=1}^m  \frac{\delta}{2^{i}}\le\frac{1}{s_m}\sum_{j=1}^m \frac{\delta}{2^j}\le\frac{\delta}{s_m}\le \delta
\end{align*}
which proves \eqref{spvdjpbjpojb}.

Finally,   \eqref{odcjvjevjvj} follows directly from
\begin{align*}
&1\ge \|v\|_{p}\ge\|u\|_{p}-\|w\|_{p}\ge 1-\delta
\end{align*}
which concludes the proof.
\end{proof}

\section{Main theorem}

\begin{thm}
Let $1< p<\infty$. Then the embedding $\ell_{p}\hookrightarrow\ell_{p,\infty}$ is strictly singular.
\end{thm}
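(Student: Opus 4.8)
The plan is to prove the equivalent statement that for every infinite dimensional closed subspace $Z\subset\ell_p$ one has $\inf\{\|z\|_{p,\infty}:z\in Z,\ \|z\|_p=1\}=0$; that is, given $\eta>0$ I will produce $z\in Z$ with $\|z\|_p=1$ and $\|z\|_{p,\infty}<\eta$. The guiding observation is that a unit vector of $\ell_p$ whose rearrangement decays like $j^{-1/p}$ over a long range has $\ell_p$ norm of order $(\log(\text{length}))^{1/p}$ but weak norm of order one; so if I can manufacture inside $Z$ a vector whose rearrangement is this ``flat'' weak profile over a long range, its normalization will have small weak norm.

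First I would iterate Lemma~\ref{adkcvhovfj} to extract a block sequence \emph{adaptively}. Having constructed $u_1,\dots,u_{i-1}\in Z$ with $v_j:=P_{m_j}u_j$ disjointly and increasingly supported, set $N_{i-1}:=\sum_{j<i}\#(\supp v_j)$ and let $\rho_{i-1}$ be the smallest nonzero value of $v_{i-1}$ (with $N_0=0$, $\rho_0=+\infty$). I then apply the lemma on $Z$ with $n=m_{i-1}$, with $\delta$ replaced by $\delta_i:=2^{-i}\delta$, and with the entry bound $\varepsilon_i:=\min\{N_{i-1}^{-1/p},\rho_{i-1}\}$. This yields $u_i\in Z$, $\|u_i\|_p=1$, with $v_i=P_{m_i}u_i$ supported on $\{m_{i-1}+1,\dots,m_i\}$, all values of $v_i$ bounded by $\varepsilon_i$, $\|v_i\|_p\ge1-\delta_i$, and tail $w_i=R_{m_i}u_i$ with $\|w_i\|_p\le\delta_i$. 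The constraint $\varepsilon_i\le\rho_{i-1}$ forces the values of $v_i$ to lie below those of $v_{i-1}$, so the blocks live at separated scales, while $\varepsilon_i\le N_{i-1}^{-1/p}$ ties the size of the new block to the support already used.

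Next I set $x:=\sum_{i=1}^k v_i$ and $\tilde x:=\sum_{i=1}^k u_i\in Z$. Because the blocks occupy disjoint, decreasingly ordered value ranges, the rearrangement $x^*$ is the concatenation of the $v_i^*$, and on the rank block belonging to $v_i$ I estimate $j^{1/p}x^*(j)$ from $v_i^*(l)\le\min\{\varepsilon_i,l^{-1/p}\}$ together with $\varepsilon_i^p N_{i-1}\le1$, obtaining $\|x\|_{p,\infty}\le 2^{1/p}$ independently of $k$. Disjointness gives $\|x\|_p=\big(\sum_{i=1}^k\|v_i\|_p^p\big)^{1/p}\ge(1-\delta)k^{1/p}$. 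I then pass to $\tilde x\in Z$ via the perturbation $\tilde x-x=\sum_{i=1}^k w_i$, whose $\ell_p$ norm is at most $\sum_i\delta_i\le\delta$; using $\ell_p\hookrightarrow\ell_{p,\infty}$ and the constant $D_p$ I get $\|\tilde x\|_{p,\infty}\le 2^{1/p}+D_p\delta$ and $\|\tilde x\|_p\ge(1-\delta)k^{1/p}-\delta$. Normalizing, $z:=\tilde x/\|\tilde x\|_p\in Z$ satisfies $\|z\|_{p,\infty}\le(2^{1/p}+D_p\delta)\big((1-\delta)k^{1/p}-\delta\big)^{-1}$, which is $<\eta$ once $\delta$ is small and $k$ large.

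The step I expect to be the main obstacle is the weak-norm bound $\|x\|_{p,\infty}\le2^{1/p}$. The delicate point is that the lemma controls only the maximal value of each block, not its internal distribution, and a combination with fixed coefficients can be defeated: blocks sitting at scales proportional to $i^{1/p}$ would recombine into an essentially \emph{constant} vector, for which the weak and strong norms coincide. The remedy is exactly the adaptivity in the extraction — choosing $\varepsilon_i$ after seeing $N_{i-1}$ and $\rho_{i-1}$ — which separates the scales (making $x^*$ a concatenation) and keeps each block's contribution to $\sup_j j^{1/p}x^*(j)$ of order one rather than accumulating logarithmically. Checking that the constraints $\varepsilon_i\le N_{i-1}^{-1/p}$ and $\varepsilon_i\le\rho_{i-1}$ are jointly achievable at every step and that they genuinely cap the per-block weak contribution is where the work lies.
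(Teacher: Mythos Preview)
Your proposal is correct and follows essentially the same strategy as the paper: iterate Lemma~\ref{adkcvhovfj} adaptively so that each new block $v_i$ sits strictly below the previous one in scale, then show $\big\|\sum_{i\le k}u_i\big\|_p\gtrsim k^{1/p}$ while $\big\|\sum_{i\le k}u_i\big\|_{p,\infty}$ stays bounded. The only notable difference is bookkeeping: you track the cumulative number $N_{i-1}$ of \emph{nonzero} entries and impose $\varepsilon_i\le N_{i-1}^{-1/p}$, which makes $x^*$ literally the concatenation of the $v_i^*$ and gives the clean bound $\|x\|_{p,\infty}\le 2^{1/p}$; the paper instead uses the previous interval length $(n_k-n_{k-1})^{-1/p}$, a doubling condition $n_k>2n_{k-1}$, and a zero-padding $\widetilde v_k$ to force block-monotonicity at every index before splitting $I_k$ into two ranges.
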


\begin{proof}
Having a sequence $0=n_0<n_1<n_2<\dots$ and $u_k\in X_{n_{k-1}}$, $k\ge 1$, we denote
\begin{align*}
&v_k=P_{n_k} u_k,\ w_k=R_{n_k} u_k,\\
&I_k=\{n_{k-1}+1,n_{k-1}+2,\dots,n_k\},\\
&b_k=\min\{|v_k(j)|;v_{k}(j)\neq 0,j\in I_k\}.
\end{align*}

Choose $0<\delta<1$.
We will construct by mathematical induction a sequence $0=n_0<n_1<n_2<\dots$ and $u_k\in X_{n_{k-1}}$, $k\ge 1$,
such that
\begin{align}
&\|u_k\|_{p}= 1,\label{wdkjvlov}\\
&2n_{k-1}<  n_k.\label{dklvhdvhjvk}\\
&\supp v_k\subset I_k,\label{efkvjefbvjop}\\
&|v_{k+1}(j)|\le \min\Big\{ b_k,\frac{1}{(n_k-n_{k-1})^{1/p}}\Big\},\label{efklbvkpbnmkb}\\
&\frac{1}{(n_{k+1}-n_{k})^{1/p}}\le b_k,\label{vfjkovnhhk}\\
&1-\delta\le \|v_k\|_{p}\le1,\label{felkvnjfbnb}\\
&\|w_k\|_{p}\le \frac{\delta}{2^{k}}.\label{klvjnfovn}
\end{align}
Consider first $k=1$. Find $u_1\in X_0=X$ with $\|u_1\|_{p}=1$. Then we can choose $n_1>n_0$ such that $\|w_1\|_{p}\le \frac{\delta}{2}$. Consequently
\begin{align*}
&\|v_1\|_{p}\overset{}{\ge}\|u_1\|_{p}-\|w_1\|_{p}\ge 1-\frac{\delta}{2} \ge 1-\delta.
\end{align*}
It is easy now to verify conditions \eqref{wdkjvlov} -- \eqref{klvjnfovn}.

Assume that we have constructed $0=n_0<n_1<n_2<\dots<n_k$ and $u_1,u_2,\dots,u_k$, $u_i\in X_{n_{i-1}}$ satisfying \eqref{wdkjvlov} -- \eqref{klvjnfovn}. Consider a space $X_{n_k}$. Choose $\frac{\delta}{2^{k+1}}$ instead of $\delta$ in Lemma \ref{adkcvhovfj} and set
\begin{align*}
&\varepsilon:=\min\Big\{b_k,\frac{1}{(n_{k}-n_{k-1})^{1/p}}\Big\}.
\end{align*}
Find $N>n_k$ such that
\begin{align*}
&\frac{1}{(N-n_{k})^{1/p}}\le b_k.
\end{align*}
By  Lemma \ref{adkcvhovfj} there exist $u\in X_{n_k}$, $\|u\|_{p}=1$ and $m\ge N$, $m\ge 2n_k$ such that for $v=P_m u$, $w=R_m u$ we have
\begin{align*}
&\supp v\subset\{n_k+1,n_k+2,\dots,m\},\\
&|v(j)|\le \varepsilon,\\
&1-\delta\le \|v\|_{p}\le 1,\\
&\|w\|_{p}\le \frac{\delta}{2^{k+1}}.
\end{align*}
Now, it suffices to choose $ n_{k+1}=m $ and $u_{k+1}=u$.

Set now
\begin{align*}
&z_N=\sum_{j=1}^N u_j\in X.
\end{align*}

Then we can write
\begin{align}
&\|z_N\|_{p}=\Big\|\sum_{j=1}^N u_j\Big\|_{p}=\Big\|\sum_{j=1}^N v_j+\sum_{j=1}^N w_j\Big\|_{p}\label{wdoicjoidwvcjiw}\\
&\overset{}{\ge} \Big\|\sum_{j=1}^N v_j\Big\|_{p}-\Big\|\sum_{j=1}^N w_j\Big\|_{p}\overset{}{\ge} \Big\|\sum_{j=1}^N v_j\Big\|_{p}-\sum_{j=1}^N \|w_j\|_{p}\nonumber\\
&\overset{\eqref{klvjnfovn}}{\ge}   \Big\|\sum_{j=1}^N v_j\Big\|_{p}-\sum_{j=1}^k \frac{\delta}{2^{j}}\ge \Big\|\sum_{j=1}^N v_j\Big\|_{p}-\delta.\nonumber
\end{align}
By \eqref{felkvnjfbnb} we obtain
\begin{align*}
& \|v_i\|_{p}\ge1-\delta,
\end{align*}
and due \eqref{efkvjefbvjop} we have that  $v_j$ have pairwise disjoint supports and then
\begin{align*}
&\Big\|\sum_{j=1}^N v_j\Big\|_{p}=\Big(\sum_{j=1}^N \|v_j\|_p^p\Big)^{1/p}\ge (1-\delta)N^{1/p}
\end{align*}
which gives with \eqref{wdoicjoidwvcjiw} that
\begin{align*}
&\|z_N\|_{p}\rightarrow\infty.
\end{align*}

Now we try to estimate $\|z_N\|_{p,\infty}$. 
Clearly
\begin{align}
&\|z_N\|_{p,\infty}=\Big\|\sum_{j=1}^N u_j\Big\|_{p,\infty}=\Big\|\sum_{j=1}^N v_j+\sum_{j=1}^N w_j\Big\|_{p,\infty}\label{sdoicvhdiovnh}\\
&\le \Big\|\sum_{j=1}^N v_j\Big\|_{p,\infty}+\Big\|\sum_{j=1}^N w_j\Big\|_{p,\infty}
                    \le \Big\|\sum_{j=1}^N v_j\Big\|_{p,\infty}+\sum_{j=1}^N \|w_j\|_{p,\infty}\nonumber\\
&\overset{\eqref{klvjnfovn}}{\le}
\Big\|\sum_{j=1}^N v_j\Big\|_{p,\infty}+\sum_{j=1}^N \frac{\delta}{2^{j}}\le\Big\|\sum_{j=1}^N v_j\Big\|_{p,\infty}+\delta.\nonumber
\end{align}

It remains to estimate $\Big\|\sum_{j=1}^N v_j\Big\|_{p,\infty}$.
Denote
\begin{align*}
&A_k=\{j\in I_k, v_k(j)=0\}.
\end{align*}
Set
\begin{align*}
&\widetilde{v}_{k}(j)=|v_k(j)|+\frac{1}{(n_k-n_{k-1})^{1/p}}\chi_{A_k}(j)
\end{align*}
and
\begin{align*}
&\widetilde{z}_N=\sum_{k=1}^N \widetilde{v}_{k}.
\end{align*}
Since  $|{v}_{k}(j)|\le\widetilde{v}_{k}(j)$ we have
\begin{align}
&\|z_N\|_{p,\infty}\overset{\eqref{sdoicvhdiovnh}}{\le}2^{\frac{1}{p}}\Big(\Big\|\sum_{j=1}^N v_j\Big\|_{p,\infty}+\delta\Big)\le 2^{\frac{1}{p}}\Big(\Big\|\sum_{j=1}^N |v_j|\Big\|_{p,\infty}+\delta\Big)\label{evhfovho}\\
&\le 2^{\frac{1}{p}}\Big(\Big\|\sum_{j=1}^N \widetilde{v}_j\Big\|_{p,\infty}+\delta\Big)=(\|\widetilde{z}_N\|_{p,\infty}+\delta).\nonumber
\end{align}
Take $i\in I_k$ and $j\in I_{k+1}$. Assume first $v_k(i)\neq 0$. Then
\begin{align*}
&\widetilde{v}_k(i)=|v_k(i)|\ge b_k\overset{\eqref{efklbvkpbnmkb}}{\ge}|v_{k+1}(j)|
\end{align*}
and also
\begin{align*}
&\widetilde{v}_k(i)\ge b_k\overset{\eqref{efklbvkpbnmkb}}{\ge}\frac{1}{(n_{k+1}-n_{k})^{1/p}}
\end{align*}
and so
\begin{align*}
&\widetilde{v}_k(i)\ge |v_{k+1}(j)|+\frac{1}{(n_{k+1}-n_{k})^{1/p}}\chi_{A_{k+1}}(j)=\widetilde{v}_{k+1}(j).
\end{align*}
 If $v_k(i)=0$ then
\begin{align*}
&\widetilde{v}_k(i)=\frac{1}{(n_{k}-n_{k-1})^{1/p}}\overset{\eqref{efklbvkpbnmkb}}{\ge}|v_{k+1}(j)|.
\end{align*}
Further by \eqref{dklvhdvhjvk} we have $n_{k+1}\ge 2n_k\ge 2n_k-n_{k-1}$ which implies
\begin{align*}
&\frac{1}{(n_{k}-n_{k-1})^{1/p}}\ge \frac{1}{(n_{k+1}-n_{k})^{1/p}}
\end{align*}
and so
\begin{align*}
&\widetilde{v}_k(i)=\frac{1}{(n_{k}-n_{k-1})^{1/p}}{\ge}\frac{1}{(n_{k+1}-n_{k})^{1/p}}.
\end{align*}
Consequently
\begin{align*}
&\widetilde{v}_k(i)\ge |v_{k+1}(j)|+\frac{1}{(n_{k+1}-n_{k})^{1/p}}\chi_{A_{k+1}}(j)=\widetilde{v}_{k+1}(j).
\end{align*}

We have proved
\begin{align}
&|\widetilde{v}_{k+1}(j)|\le |\widetilde{v}_k(i)| \ \ i\in I_k, j\in I_{k+1}.\label{sdjidjvj}
\end{align}

Fix $j\in\mathbb{N}$. Since $I_k$ have pairwise disjoint supports and $\mathbb{N}=\bigcup_{k=1}^\infty$ there exists a unique $k\in\mathbb{N}$ such that $j\in I_k$.

If $k>N$ then $\widetilde{z}_N(j)=0$ and since $|\widetilde{z}_N(i)|>0$ for $i\le n_N$ we obtain $(\widetilde{z}_N)^*(j)=0$ and so
\begin{align}
&j^{1/p}(\widetilde{z}_N)^*(j)=0, \ \ j\ge n_N+1.\label{wdjchcviohivh}
\end{align}
Let $k\le N$. Then $n_{k-1}+1\le j\le n_k$ and by \eqref{sdjidjvj} there is $n_{k-1}+1\le i\le n_k$ such that $(\widetilde{z}_N)^*(j)=(\widetilde{v}_{k})^*(j-n_{k-1})=\widetilde{v}_{k}(i)$. 

We have two possibilities. Either {\bf(a)} $n_{k-1}+1\le j\le 2n_{k-1}$ or {\bf(b)} $2n_{k-1}< j\le n_k$.

{\bf(a)} If $n_{k-1}+1\le j\le 2n_{k-1}$ we have
\begin{align}
&j^{1/p}(\widetilde{z}_N)^*(j)=j^{1/p}(\widetilde{v}_k)^*(j-n_{k-1})\le (2n_{k-1})^{1/p}\widetilde{v}_k(i)\label{evoriovg}\\
&\overset{\eqref{efklbvkpbnmkb}}{\le} \frac{(2n_{k-1})^{1/p}}{(n_{k-1}-n_{k-2})^{1/p}}\overset{\eqref{dklvhdvhjvk}}{\le} 4^{1/p}\le 2^{1+1/p}.\nonumber
\end{align}

{\bf(b)} If $2n_{k-1}< j\le n_k$ we obtain by Proposition \ref{vnor}
\begin{align*}
&j^{1/p}(\widetilde{z}_N)^*(j)=j^{1/p}(\widetilde{v}_k)^*(j-n_{k-1})\le \Big(\frac{j}{j-n_{k-1}}\Big)^{1/p}(j-n_{k-1})^{1/p}(\widetilde{v}_k)^*(j-n_{k-1})\\
&\le 2^{1/p}\|(\widetilde{v}_k)^*\|_{p,\infty}\le  2^{1/p}D_{p}\|(\widetilde{v}_k)^*\|_{p}=2^{1/p}D_{p}\|\widetilde{v}_k\|_{p}\\
&\le 2^{1/p}D_{p}\Big(\|v_k\|_{p}+\frac{1}{(n_k-n_{k-1})^{1/p}}\|\chi_{A_k}\|_{p}\Big)\\
&=2^{1/p}D_{p}\Big(\|v_k\|_{p}+\frac{1}{(n_k-n_{k-1})^{1/p}}(\#A_k)^{1/p}\Big)\\
&\le 2^{1/p}D_{p}\Big(1+\ \frac{(n_k-n_{k-1})^{1/p}}{(n_k-n_{k-1})^{1/p}}\Big)=2^{1/p+1}D_{p}.
\end{align*}
This gives us with \eqref{evoriovg} and \eqref{wdjchcviohivh}
\begin{align*}
& \|\widetilde{z}_N\|_{p,\infty}\lesssim 2^{1+1/p}(1+D_{p}).
\end{align*}
Using \eqref{evhfovho} we conclude that $\|z_N\|_{p,\infty}$ is bounded which proves that the embedding cannot be an isomorphism on $X$ and finishes the proof.
\end{proof}

{\bf{Final note:}} After the main results one can ask a couple of obvious questions:

 Is it possible to obtain the above result  also  for natural embeddings between Lorentz spaces $\ell_{p,r} \hookrightarrow \ell_{q,s}$? 
 
 Can be proven that the embedding (\ref{Lorentz Embd}) is even finitely strictly singular? (In view of  \cite{Pl} in which was proved that the natural embedding $\ell_p \hookrightarrow \ell_q$ is not only strictly singular but also finitely strictly singular this is an interesting question). 

\end{document}